\documentclass[11pt]{article}

\usepackage{amsmath,amsthm,amssymb,mathtools}
\usepackage{microtype}
\usepackage{hyperref}
\usepackage{authblk}
\usepackage{pgfplots}
\usepackage{float}
\usepackage{xurl}
\pgfplotsset{compat=1.18}
\usepackage[nameinlink,capitalize]{cleveref}

\newtheorem{theorem}{Theorem}[section]
\newtheorem{lemma}[theorem]{Lemma}

\theoremstyle{definition}

\newtheorem{remark}[theorem]{Remark}

\newcommand{\norm}[1]{\left\lVert #1 \right\rVert}

\DeclareMathOperator{\tr}{tr}

\title{Carbery's inequality in the Schatten--von Neumann classes}

\author[3]{
  Ziang Chen$^\ast$
}
\author[1]{
  Paata Ivanisvili$^\dagger$
}
\author[2]{
  Jos\'e Madrid$^\dagger$
}
\author[3]{
  Haozhu Wang$^\ast$
}

\affil[1]{University of California, Irvine, \texttt{pivanisv@uci.edu}}
\affil[2]{Virginia Tech, \texttt{josemadrid@vt.edu}}
\affil[3]{xAI, \texttt{\{zchen,hwang\}@x.ai}}

\date{}

\begin{document}

\maketitle

\begin{center}
\small
Authors are listed alphabetically.\\
$^\dagger$Mathematical contributor,
$^\ast$Engineering contributor.
\end{center}

\begin{abstract}
Carbery posed a question of sharpened triangle inequalities for families of operators in the Schatten--von Neumann classes $S_p$, $p\geq 2$. He established a weaker form of the desired estimate for even integer values of $p$. In the commutative setting the corresponding sharp inequality (with optimal exponent $p'=\frac{p}{p-1}$) was recently obtained for all integer $p\geq 2$. In the present work we resolve Carbery's question completely in the non-commutative setting: we prove the sharp inequality
$$
\Big\|\sum_{j} T_{j}\Big\|_{S_{p}}\leq \|(\alpha_{ij}^{p'})\|^{1/p'}_{\ell_{2}\to \ell_{2}} \Big( \sum_{j} \|T_{j}\|^{p}_{S_{p}}\Big)^{1/p}
$$
for \emph{all} $p\geq 2$ and  all countable sequences of operators $(T_{j}) \subset S_{p}$, where $\alpha_{ij}$ are almost orthogonality coefficients. The proof is based on a block-operator reduction and a  complex interpolation of the
polar parts of the blocks. 
\end{abstract}

\section{Introduction}

Let $\mathcal{H}$ be a separable complex Hilbert space and let $S_p = S_p(\mathcal{H})$ denote the Schatten--von Neumann class of compact operators on $\mathcal{H}$ equipped with the Schatten norm $\|\cdot\|_{S_p}$, $p\geq 2$.
Recall that for $T \in S_{p}$ we have $\|T\|_{S_{p}} = \Big(\sum_{j} \lambda_{j}^{p}\Big)^{1/p
}$ where $(\lambda_{j})$ are singular values of $T$, i.e.,  the eigenvalues of $|T|:=\sqrt{T^{*}T}$. For $p=\infty$ we identify the space $S_{\infty}$ with all bounded linear operators on $\mathcal{H}$ with the usual operator norm. We refer the reader to \cite{Ringrose1971} for basic properties of $S_{p}$ classes.

Carbery  \cite{Carbery2009}  initiated the study of almost-orthogonality phenomena in these classes. Given a finite family $T_1,\dots,T_N\in S_p$, he introduced the symmetric matrix of orthogonality coefficients  $(\alpha_{ij})_{i,j=1}^{N}$ with entries $\alpha_{ij} \in [0,1]$  satisfying
\begin{align}\label{carbp}
\|T_i T^{*}_j\|_{S_{p/2}} \leq \alpha_{ij}^{2} \|T_i\|_{S_p}\|T_j\|_{S_p} \quad \text{and} \quad \|T^{*}_i T_j\|_{S_{p/2}} \leq \alpha_{ij}^{2} \|T_i\|_{S_p}\|T_j\|_{S_p}
\end{align}
and asked for sharpened forms of the triangle inequality measuring the almost-orthogonality through the coefficients $\alpha_{ij}$. In particular, in \cite[Section~3.3, (3.1)]{Carbery2009}, he asked whether the inequality
\begin{align}\label{enisori}
\Bigl\|\sum_{j=1}^N T_j\Bigr\|_{S_p}
\leq
\|(\alpha_{ij}^{p'})\|_{2}^{1/p'}
\Bigl(\sum_{j=1}^N\|T_j\|_{S_p}^p\Bigr)^{1/p}
\end{align}
holds true for all $p\geq 2$, where $\| (\alpha_{ij}^{p'})\|_{q}$ denotes the operator norm from $\ell_{q}^{N}$ to $\ell_{q}^{N}$. He proved \cite[Theorem~1.3]{Carbery2009} a weaker estimate with the Schur norm $\|(\alpha_{ij})\|_{1}^{1/p'}$ instead of  $\|(\alpha_{ij}^{p'})\|_{2}^{1/p'}$, but only when $p$ is an even integer. Carbery also asked a related question \cite[Section~3.5]{Carbery2009} whether the inequality 
\begin{align}\label{oripower}
\Bigl\|\sum_{j=1}^N T_j\Bigr\|_{S_p}
\leq
\|(\alpha_{ij}^{2})\|_{1}^{1/p'}
\Bigl(\sum_{j=1}^N\|T_j\|_{S_p}^p\Bigr)^{1/p}
\end{align}
holds for all $N\geq 2$ in the commutative case. 

In the commutative $L^p$ setting, the first sharp results concerned the two-function case. For $N=2$, Carbery \cite[Proposition~3.1]{Carbery2009} established the analogue of \eqref{oripower} for characteristic functions of sets. Carlen--Frank--Ivanisvili--Lieb \cite{CFIL2021} later proved a substantially stronger two-function inequality for arbitrary functions in $L^p$; in particular, their result implies Carbery's proposed estimate (\ref{oripower}) when $N=2$. This two-function result was subsequently refined by Ivanisvili--Mooney \cite{IvanisviliMooney2020}. 

For arbitrary finite families, the problem is more delicate. A recent work \cite{CDPIMW2026} constructs a counterexample showing that no estimate of the form (\ref{oripower}) can hold, in general, with any power on the coefficients $\alpha_{ij}$ strictly larger than $p'$. The same work proves the critical exponent $p'$ for every integer $p\geq 2$ in the commutative $L^{p}$ setting. Its proof expands the integer power and combines the resulting multilinear expression with a Schur-type test. Similar inequalities for many functions have been studied also in \cite{CFL2020}.

In this paper we  affirmatively answer the question posed by Carbery in \cite[Section~3.3, (3.1)]{Carbery2009} in full generality for the Schatten--von Neumann classes. We prove the following sharp statement, valid for \emph{all} real $p\geq 2$ and for \emph{arbitrary} (not necessarily positive or self-adjoint) operators in $S_{p}$. 

\begin{theorem}\label{mainthm}
Let $N\geq 2$,  $p\geq 2$,  and let $T_1,\dots,T_N\in S_p(\mathcal{H})$. Let $(\alpha_{ij})_{1\leq i,j\leq N}$ be a symmetric matrix with entries $\alpha_{ij} \in [0,1]$ satisfying (\ref{carbp}). Then
\[
\Bigl\|\sum_{j=1}^N T_j\Bigr\|_{S_p}
\leq
\bigl\|(\alpha_{ij}^{p'})\bigr\|_{2}^{1/p'}
\Bigl(\sum_{j=1}^N\|T_j\|_{S_p}^p\Bigr)^{1/p}.
\]
\end{theorem}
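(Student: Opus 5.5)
The plan is to reduce the theorem to a bound for a single block operator on $\mathcal{H}\otimes\ell_2^N$. Then we interpolate a family built from the polar decompositions $T_j=U_j|T_j|$, where the exponents of $|T_j|$ vary analytically.

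\emph{Reductions.} By homogeneity and a limiting argument we may assume all $T_j\neq 0$ and that the family is finite. By duality it suffices to bound $|\tr(B^{*}\sum_j T_j)|$ for $\|B\|_{S_{p'}}=1$. It is more convenient to square:
\[
\Bigl\|\sum_j T_j\Bigr\|_{S_p}^2=\Bigl\|\sum_{i,j}T_i^{*}T_j\Bigr\|_{S_{p/2}}.
\]
The right-hand side is $\|\mathbf 1^{*}G\,\mathbf 1\|$, where $G=(T_i^{*}T_j)_{i,j}$ is the block Gram matrix of the column operator $C=(T_1,\dots,T_N)^{T}$. Since $\mathbf 1$ has norm $\sqrt N$, one cannot simply factor through $\mathbf 1$. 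Instead we insert weights. Set $a_j=\|T_j\|_{S_p}$ and let $x$ be the unit Perron vector of $(\alpha_{ij}^{p'})$. We write $T_j=\lambda_j\,\widetilde T_j$ with scalar weights $\lambda_j$ chosen from $a_j$ and $x_j$, so that the quantity to be bounded becomes $\langle \widetilde G w,w\rangle$ for a suitable vector $w$. By a Schur-test argument the final constant is then $\|(\alpha_{ij}^{p'})\|_{2}^{1/p'}$. The choice of weights is dictated by the equality case of H\"older: we need $\sum_j a_j^{p}$ to appear with exponent $1/p$, and the matrix norm with exponent $1/p'$.

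\emph{Interpolation.} For $z$ in the strip $0\le \operatorname{Re} z\le 1$, define
\[
T_j(z)=a_j^{\,p(1-z)/2}\,U_j\,|T_j/a_j|^{p(1-z)/2},
\]
together with the analogous family $B(z)$ for the dual element. At $z=0$ these give Hilbert--Schmidt operators. At $\operatorname{Re} z=1$ they are partial isometries, which are bounded. The point $z=1-2/p$ recovers the original operators. We then interpolate the function
\[
F(z)=\sum_{i,j}c_{ij}(z)\,\tr\bigl(B(z)^{*}\,T_i(z)^{*}T_j(z)\bigr),
\]
where the coefficients $c_{ij}(z)$ are analytic powers $\alpha_{ij}^{\,\cdot}$ placing the $\alpha$'s with the right exponent at each endpoint.

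On $\operatorname{Re} z=0$, the Hilbert--Schmidt endpoint, we use the Gram structure and the hypothesis (\ref{carbp}) at $p=2$. After rescaling, the cross terms are controlled by $\alpha_{ij}^{2}$, which gives a quadratic form bounded by the $\ell_2$ operator norm of the weighted matrix. On $\operatorname{Re} z=1$ we only need a crude bound through the operator norm of a block matrix of contractions weighted by $c_{ij}$. This is again a Schur/$\ell_2$ bound. The three lines lemma then yields the geometric mean of the two endpoint bounds.

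\emph{Main obstacle.} The hypothesis (\ref{carbp}) is given only at the exponent $p$. It does not automatically transfer to the deformed operators $T_j(z)$ at the endpoints: the bound for $\|T_i(z)T_j(z)^{*}\|$ at $\operatorname{Re} z=0$ is not implied by the bound at $p$.

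My first attempt is to avoid interpolating the hypothesis at all. Instead I would interpolate in the block operator itself, letting the off-diagonal blocks $|T_i|^{\theta}U_i^{*}U_j|T_j|^{\theta}$ vary. I would then use H\"older in $S_{p/2}$ to reduce the cross terms at the intermediate point directly to $\|T_i^{*}T_j\|_{S_{p/2}}\le\alpha_{ij}^2a_ia_j$.

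If that fails, the fallback is to prove the estimate first for $\|\sum_j T_j\|_{S_p}^2$ written as $\|R R^{*}\|_{S_{p/2}}$. Here $R$ is the row $(T_1,\dots,T_N)$ multiplied by a diagonal scalar weight. We would bound the $S_{p/2}$-norm of the block matrix $(\alpha_{ij}^{-2}\,\cdot\,T_iT_j^{*})$ by a scalar-matrix Schur test, in which the exponent $p'$ on $\alpha_{ij}$ comes from H\"older between $p/2$ and its conjugate.

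Getting the exponent exactly $p'$ rather than $1$ or $2$ is the step I expect to require the careful choice of the analytic weights $c_{ij}(z)$.
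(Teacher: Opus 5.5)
\textbf{There is a genuine gap.} As written this is a plan rather than a proof, and the step you flag as the main obstacle is never resolved.

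\emph{Why your deformations fail.} Every family you propose deforms the \emph{factors}. This includes $T_j(z)=a_j^{p(1-z)/2}U_j|T_j/a_j|^{p(1-z)/2}$, and in your first attempt the blocks $|T_i|^{\theta}U_i^*U_j|T_j|^{\theta}$ with varying $\theta$. So on $\operatorname{Re}z=0$ you must bound products such as $|T_i|^{p/2}U_i^*U_j|T_j|^{p/2}$, and (\ref{carbp}) says nothing about these. In fact the Araki--Lieb--Thirring inequality gives $\||T_i||T_j|\|_{S_{p/2}}^{p/2}\le\||T_i|^{p/2}|T_j|^{p/2}\|_{S_1}$: the deformed products dominate the original ones, so no upper bound transfers. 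Applying H\"older ``at the intermediate point'' cannot repair this. The three lines lemma only carries information from the boundary lines inward, so the hypothesis has to enter at $\operatorname{Re}z=0$ and $\operatorname{Re}z=1$.

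\emph{Why the fallback fails.} A scalar-matrix Schur test is not available in $S_{p/2}$. For $q<\infty$ the $S_q$ norm of a block operator is not controlled by the $\ell_2$ operator norm of the matrix of block norms; already at $q=2$ it equals the Frobenius norm of that matrix. You do not say what inequality would replace this.

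\emph{The bookkeeping is also missing.} Your reduction to $\mathbf 1^*G\mathbf 1$ carries the $\sqrt N$ loss you mention, and the Perron-vector weights are never specified. So nothing in the proposal actually produces the constant $K^{1/p'}$ with $K=\|(\alpha_{ij}^{p'})\|_{\ell_2\to\ell_2}$.

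\emph{How the paper resolves this.} It uses two ideas.
\begin{itemize}
\item[(i)] First it reduces to $T_j\ge0$ via the dilation $P_j$ in (\ref{dadeb}). This needs the lemma $2\|B\|_{S_p}\le\|M\|_{S_p}$ for the off-diagonal block of a positive $2\times2$ operator, and a check that the same $\alpha_{ij}$ remain admissible for $(P_j)$. Then it takes square roots: with $R(h_1,\dots,h_N)=\sum_jT_j^{1/2}h_j$ one has $RR^*=\sum_jT_j$. Hence $X=R^*R$, with blocks $X_{ij}=T_i^{1/2}T_j^{1/2}$, has exactly the $S_p$ norm of the sum, with no $\sqrt N$ loss and no weights.
\item[(ii)] It then deforms each block through its \emph{own} polar decomposition $X_{ij}=U_{ij}|X_{ij}|$, setting $F(z)_{ij}=\alpha_{ij}^{\phi(z)}U_{ij}|X_{ij}|^{p(1-z)/2}$. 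It gives up the Gram structure away from $\theta=1-2/p$.
\end{itemize}

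\emph{The endpoint estimates.} On $\operatorname{Re}z=0$ one has $\|F(it)_{ij}\|_{S_2}=\alpha_{ij}^{p'/2-p/2}\|X_{ij}\|_{S_p}^{p/2}$. The hypothesis at exponent $p$ now enters directly through $\|X_{ij}\|_{S_p}^2\le\|T_iT_j\|_{S_{p/2}}\le\alpha_{ij}^2a_ia_j$, which uses Weyl's majorant theorem. This gives $\|F(it)\|_{S_2}^2\le\sum_{i,j}\alpha_{ij}^{p'}a_i^{p/2}a_j^{p/2}\le K\sum_ka_k^p$. On $\operatorname{Re}z=1$ every block is $\alpha_{ij}^{p'}$ times a contraction, so $\|F(1+it)\|_\infty\le K$. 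Interpolating between $S_2$ and $S_\infty$ yields $K^{1/p+(p-2)/p}=K^{1/p'}$.

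The weight $\phi$, with $\phi(\theta)=0$, $\operatorname{Re}\phi(it)=p'/2-p/2$ and $\operatorname{Re}\phi(1+it)=p'$, is exactly the analytic coefficient $c_{ij}(z)$ you were looking for. What your proposal lacks is that it must multiply a deformation of the block $X_{ij}$, not of the factors $T_i,T_j$.
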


\begin{remark}
    Since
\[
   \norm{(\alpha_{ij}^{p'})}_{2}
   \le
   \sqrt{\norm{(\alpha_{ij}^{p'})}_{1} \norm{(\alpha_{ij}^{p'})}_{\infty}} = \|(\alpha^{p'}_{ij})\|_{1}=
   \sup_i\sum_j \alpha^{p'}_{ij}\leq \|(\alpha_{ij})\|_{1},
\]
Theorem~\ref{mainthm} improves Carbery’s even-integer Schur-$(\alpha_{ij})$ estimate,  answers the $\ell_{2}$-operator-norm question (\ref{enisori}), and it extends \cite[Theorem~1.1]{CDPIMW2026}  to Schatten classes and all $p\geq 2$. 
\end{remark}

\begin{remark}
    The exponent $p'$ in the constant  $\norm{(\alpha_{ij}^{p'})}_{2}$ is optimal, i.e.,  the counter-example from \cite{CDPIMW2026}
   shows that no larger exponent than $p'$ is admissible on the coefficients $\alpha_{ij}$.
\end{remark}

\begin{remark}
    When $p=\infty$ the result should be understood as follows 
\[
\Bigl\|\sum_{j=1}^N T_j\Bigr\|_{\infty}
\leq
\bigl\|(\alpha_{ij})\bigr\|_{2} \sup_{j} 
\|T_j\|_{\infty},
\]
where symmetric  $\alpha_{ij} \in [0,1]$ satisfy $\| T_{i}T_{j}^{*}\|_{\infty}\leq \alpha_{ij}^{2}\|T_{i}\|_{\infty}\|T_{j}\|_{\infty}$ and $\| T_{i}^{*}T_{j}\|_{\infty}\leq \alpha_{ij}^{2}\|T_{i}\|_{\infty}\|T_{j}\|_{\infty}$. Here $\| T\|_{\infty}$ denotes the operator norm of a bounded operator on $\mathcal{H}$. The proof of this endpoint case is given in Section~\ref{Grok11}.
\end{remark}

\begin{remark}
    Passage from a finite to a countable family $(T_{j})$ is given in Section~\ref{count1}.
\end{remark}

The proof proceeds in two steps. First we establish the result for finite positive operators by means of complex interpolation between the Schatten classes $S_2$ and $S_\infty$. The analytic family is constructed so that the scalar factors $\alpha_{ij}$ compensate for the growth of the blocks $X_{ij}$ (see Section~\ref{compli}) on the left boundary while keeping the right boundary under control by the operator norm of the matrix $(\alpha_{ij}^{p'})$. The case $p=2$ is handled directly by the Hilbert--Schmidt inner-product argument. The passage from positive to arbitrary operators is done by the standard $2\times 2$ positive dilation: each $T_j$ is replaced by the positive operator
\[
P_j=\begin{pmatrix}|T_j^*|&T_j\\T_j^*&|T_j|\end{pmatrix}
\]
on $\mathcal{H}\oplus\mathcal{H}$. A simple lemma on off-diagonal blocks of positive $2\times 2$ matrices then reduces the original estimate to the positive case already proved, while a direct computation verifies that the same coefficients $\alpha_{ij}$ remain admissible for the dilated family.

The interpolation technique employed here is in the spirit of the abstract complex interpolation theory for operator-valued kernels developed by Fournier and Russo \cite{FournierRusso1977}, although we work directly with the Schatten classes on a fixed Hilbert space rather than with integral operators. The necessary interpolation result for Schatten classes---namely $[S_2,S_\infty]_\theta=S_p$ with $\theta=1-2/p$---is classical and may be found, for example, in Simon \cite{SimonTraceIdeals}. 

The paper is organized as follows. Section 2 contains the complete proof of Theorem~\ref{mainthm}. We first treat the finite positive case (Subsection 2.1 for $p=2$ and Subsection 2.2 for $p>2$), and then reduce the general case to the positive case via the $2\times 2$ dilation (Subsection 2.3).

\section{Proof of Theorem~\ref{mainthm}}\label{compli}

We first prove the estimate for positive finite-rank operators. The extension
from finite-rank positive operators to arbitrary positive operators in $S_p$
is obtained by approximation at the end of Subsection~\ref{subsec:positive-pgtwo}.
The general case is then reduced to the positive
case in Subsection~\ref{subsec:general}.

For a positive family $T_1,\dots,T_N$, define
\[
R:\mathcal H^N\to\mathcal H,\qquad
R(h_1,\dots,h_N)=\sum_{j=1}^N T_j^{1/2}h_j,
\]
and set $X=R^*R$. Then the $(i,j)$-block of $X$ is
\[
X_{ij}=T_i^{1/2}T_j^{1/2},
\]
while
\[
RR^*=\sum_{j=1}^N T_j.
\]
Therefore $X$ and $\sum_j T_j$ have the same non-zero singular values, and hence
\[
\|X\|_{S_q(\mathcal H^N)}
=
\Bigl\|\sum_{j=1}^N T_j\Bigr\|_{S_q(\mathcal H)}
\qquad (1\le q\le \infty).
\]

\subsection{Positive case: $p=2$} 

Let $a_j=\|T_j\|_{S_2}$, and let $X=R^*R$ be the block operator defined above. 
We have  $\|X_{ij}\|_{S_2}^2\leq\|T_i T_j\|_{S_1}$. Indeed,
\[
\|X_{ij}\|_{S_2}^2
=\tr(T_j^{1/2}T_iT_j^{1/2})
=\tr(T_iT_j)
\le \|T_iT_j\|_{S_1}.
\]
Therefore
\[
\|X_{ij}\|_{S_2}\leq\alpha_{ij}\,a_i^{1/2}a_j^{1/2}.
\]
The Hilbert--Schmidt norm of the block operator therefore satisfies
\[
\|X\|_{S_2}^2=\sum_{i,j}\|X_{ij}\|_{S_2}^2\leq\sum_{i,j}\alpha_{ij}^2 a_i a_j.
\]
Let $\Delta=(\alpha_{ij}^2)$. The right-hand side is the quadratic form $\langle\Delta a,a\rangle$ evaluated at the vector $a=(a_1,\dots,a_N)$. Hence it is at most $K\|a\|_{\ell_2}^2=K\sum a_j^2$, where $K=\|(\alpha_{ij}^2)\|_{\ell_2\to\ell_2}$. It follows that
\[
\|X\|_{S_2}\leq K^{1/2}\Bigl(\sum_j a_j^2\Bigr)^{1/2},
\]
which is the asserted bound for $p=2$. The same argument is valid for
arbitrary positive operators in $S_2$. 

\subsection{Positive case: $p>2$ (complex interpolation)}
\label{subsec:positive-pgtwo}

Let $a_j=\|T_j\|_{S_p}$, and let $X=R^*R$ be the block operator defined above.
Let $q=p/2$. Since
\[
X_{ij}^*X_{ij}=T_j^{1/2}T_iT_j^{1/2},
\]
the non-zero eigenvalues of $X_{ij}^*X_{ij}$ agree, with algebraic
multiplicity, with those of $T_j^{1/2}T_j^{1/2}T_i=T_jT_i$, and hence with
those of $T_iT_j$. Since $X_{ij}^*X_{ij}$ is positive, its $S_q$-norm is the
$\ell_q$-norm of this eigenvalue list. Weyl's majorant theorem, applied to the
possibly non-normal operator $T_iT_j$, gives
\[
\sum_k \lambda_k(X_{ij}^*X_{ij})^q
\le
\sum_k s_k(T_iT_j)^q,
\]
where $\lambda_k(X_{ij}^*X_{ij})$ are the non-zero eigenvalues of
$X_{ij}^*X_{ij}$ and $s_k(T_iT_j)$ are the singular values of $T_iT_j$.
Therefore
\[
\|X_{ij}\|_{S_p}^2
=
\|X_{ij}^*X_{ij}\|_{S_q}
\le
\|T_iT_j\|_{S_q}.
\]
Thus, by the hypothesis,
\[
\|X_{ij}\|_{S_p}^2\leq \alpha_{ij}^2 a_i a_j
\]
and therefore $\|X_{ij}\|_{S_p}\leq\alpha_{ij}\,a_i^{1/2}a_j^{1/2}$. Write $X_{ij}=U_{ij}|X_{ij}|$ for the polar decomposition. Define the analytic family of block operators on the strip $0\leq\operatorname{Re}z\leq 1$ by declaring that if $\alpha_{ij}>0$,
\[
F(z)_{ij}=\alpha_{ij}^{\phi(z)}\,U_{ij}\,|X_{ij}|^{\psi(z)},
\]
where
\[
\psi(z)=\frac{p}{2}(1-z),\qquad\phi(z)=\Bigl(\frac{p'}{2}-\frac{p}{2}\Bigr)\frac{\theta-z}{\theta}
\]
with interpolation point $\theta=1-2/p\in(0,1)$. If \(\alpha_{ij}=0\), then the preceding estimate gives
\(\|X_{ij}\|_{S_p}=0\), hence \(X_{ij}=0\); in this case set
\(F(z)_{ij}=0\) for all \(z\). Since we are first working with finite-rank positive operators, this family is
defined by finite-dimensional spectral calculus on $\operatorname{supp}|X_{ij}|$,
and it is set equal to $0$ on $\ker |X_{ij}|$. Thus $z\mapsto F(z)$ is analytic
in the open strip and continuous on the closed strip as a function with values in
$S_2+S_\infty$. By construction $\psi(\theta)=1$ and $\phi(\theta)=0$, so $F(\theta)=X$.

By the standard complex interpolation identity for Schatten ideals,
\[
[S_2(\mathcal H^N),S_\infty(\mathcal H^N)]_\theta
=
S_p(\mathcal H^N),
\qquad \theta=1-\frac2p,
\]
see, for example, \cite{SimonTraceIdeals}, we obtain
\[
\|X\|_{S_p}\leq\Bigl(\sup_t\|F(it)\|_{S_2}\Bigr)^{2/p}\Bigl(\sup_t\|F(1+it)\|_\infty\Bigr)^{(p-2)/p}.
\]

On the left boundary, for blocks with $\alpha_{ij}>0$ the real part of the exponent on $|X_{ij}|$ is $p/2$ while the imaginary part contributes only a unitary factor. The real part of the exponent on $\alpha_{ij}$ is $p'/2-p/2$. Hence
\[
\|F(it)_{ij}\|_{S_2}=\alpha_{ij}^{p'/2-p/2}\|X_{ij}\|_{S_p}^{p/2}\leq\alpha_{ij}^{p'/2}a_i^{p/4}a_j^{p/4}.
\]
Summing the squared block norms gives
\[
\|F(it)\|_{S_2}^2\leq\sum_{i,j}\alpha_{ij}^{p'}a_i^{p/2}a_j^{p/2}\leq K\sum_k a_k^p,
\]
where $K=\|(\alpha_{ij}^{p'})\|_{\ell_2\to\ell_2}$. Thus $\|F(it)\|_{S_2}\leq(K\sum a_k^p)^{1/2}$, and raising to the power $2/p$ produces the factor $K^{1/p}(\sum a_j^p)^{1/p}$.

On the right boundary $z=1+it$ we have
\[
\operatorname{Re}\psi(1+it)=0,\qquad \operatorname{Re}\phi(1+it)=p'.
\]
Hence each nonzero block can be written as
\[
F(1+it)_{ij}=\alpha_{ij}^{p'} C_{ij}(t),
\qquad \|C_{ij}(t)\|_{\infty}\le 1,
\]
and the blocks with $\alpha_{ij}=0$ are zero. Therefore, for
$h=(h_1,\dots,h_N)\in \mathcal H^N$,
\[
\|(F(1+it)h)_k\|
\le
\sum_{j=1}^N \alpha_{kj}^{p'}\|h_j\|.
\]
Taking the $\ell_2$-norm in $k$ gives
\[
\|F(1+it)h\|_{\mathcal H^N}
\le
\|(\alpha_{ij}^{p'})\|_{\ell_2^N\to\ell_2^N}
\bigl\|(\|h_j\|)_{j=1}^N\bigr\|_{\ell_2^N}
=
K\|h\|_{\mathcal H^N}.
\]
Thus
\[
\|F(1+it)\|_\infty\le K.
\]
Raising to the power $(p-2)/p$ contributes the factor $K^{(p-2)/p}$.

Combining the boundary estimates yields
\[
\|X\|_{S_p}\leq K^{1/p+(p-2)/p}\Bigl(\sum_j a_j^p\Bigr)^{1/p}=K^{1/p'}\Bigl(\sum_j a_j^p\Bigr)^{1/p}.
\]
This proves the desired estimate for positive finite-rank operators.

It remains to remove the finite-rank assumption. Let $T_j\ge0$ be arbitrary
operators in $S_p$. After discarding the zero terms, choose positive finite-rank
operators $T_j^{(n)}$ such that $T_j^{(n)}\to T_j$ in $S_p$. By Hölder's
inequality,
\[
\|T_i^{(n)}T_j^{(n)}-T_iT_j\|_{S_{p/2}}
\le
\|T_i^{(n)}-T_i\|_{S_p}\|T_j^{(n)}\|_{S_p}
+
\|T_i\|_{S_p}\|T_j^{(n)}-T_j\|_{S_p},
\]
and hence $T_i^{(n)}T_j^{(n)}\to T_iT_j$ in $S_{p/2}$. Fix $\varepsilon>0$ and set
\[
\beta_{ij}^{(\varepsilon)}=\min\{1,\alpha_{ij}+\varepsilon\}.
\]
For all sufficiently large $n$, the finite-rank family $(T_j^{(n)})$ satisfies
the positive-case hypotheses with coefficients $\beta_{ij}^{(\varepsilon)}$.
Applying the finite-rank estimate and then letting $n\to\infty$ gives
\[
\Bigl\|\sum_{j=1}^N T_j\Bigr\|_{S_p}
\le
\|((\beta_{ij}^{(\varepsilon)})^{p'})\|_{\ell_2\to\ell_2}^{1/p'}
\Bigl(\sum_{j=1}^N\|T_j\|_{S_p}^p\Bigr)^{1/p}.
\]
Finally let $\varepsilon\downarrow0$. Since $N<\infty$, the matrix norm is
continuous, and the desired positive-operator inequality follows.

\subsection{General case (arbitrary operators)}
\label{subsec:general}

Let $T_j=U_j|T_j|$ be the polar decomposition and define the positive operators on the doubled space $\mathcal{H}\oplus\mathcal{H}$ by
\begin{align}\label{dadeb}
P_j=\begin{pmatrix}|T_j^*|&T_j\\T_j^*&|T_j|\end{pmatrix}.
\end{align}
Let $s_j$ be the support projection of $|T_j|$. Since $U_j^*U_j=s_j$, the map
\[
W_j:s_j\mathcal H\to \mathcal H\oplus\mathcal H,\qquad
W_jh=2^{-1/2}(U_jh,h),
\]
is an isometry. Moreover,
\[
P_j=2W_j|T_j|W_j^*.
\]
Thus $P_j\ge0$, and the nonzero eigenvalues of $P_j$ are exactly twice the
singular values of $T_j$. Consequently
\[
\|P_j\|_{S_p}=2\|T_j\|_{S_p}=2a_j.
\]
Moreover,
\[
\sum_j P_j=\begin{pmatrix}\sum_j|T_j^*|&\sum_j T_j\\\sum_j T_j^*&\sum_j|T_j|\end{pmatrix}.
\]

\begin{lemma}\label{lem:offdiag}
Let
\[
M=\begin{pmatrix}A&B\\B^*&C\end{pmatrix}\ge0
\]
belong to $S_p(\mathcal H\oplus\mathcal H)$, where $p\ge1$. Then
\[
2\|B\|_{S_p}\le \|M\|_{S_p}.
\]
\end{lemma}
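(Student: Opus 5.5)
The idea is to conjugate $M$ by the symmetry $J=\begin{pmatrix}I&0\\0&-I\end{pmatrix}$ and compare the two resulting positive operators. Since $J$ is unitary, $JMJ=\begin{pmatrix}A&-B\\-B^*&C\end{pmatrix}$ is again positive, and $\|JMJ\|_{S_p}=\|M\|_{S_p}$. Subtracting,
\[
M-JMJ=\begin{pmatrix}0&2B\\2B^*&0\end{pmatrix}=:2\widetilde B .
\]

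Next I would compute the Schatten norm of the self-adjoint dilation $\widetilde B=\begin{pmatrix}0&B\\B^*&0\end{pmatrix}$. Its square is $\widetilde B^{2}=\operatorname{diag}(BB^*,B^*B)$, so $|\widetilde B|=\operatorname{diag}(|B^*|,|B|)$. Because $|B^*|$ and $|B|$ have the same nonzero singular values, we get
\[
\|\widetilde B\|_{S_p}^p=\|B^*\|_{S_p}^p+\|B\|_{S_p}^p=2\|B\|_{S_p}^p ,
\]
that is, $\|\widetilde B\|_{S_p}=2^{1/p}\|B\|_{S_p}$.

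The key step is an estimate on differences of positive operators: for $P,Q\ge0$ in $S_p$,
\[
\|P-Q\|_{S_p}^p\le \|P\|_{S_p}^p+\|Q\|_{S_p}^p .
\]
This inequality is known, for instance as a consequence of the fact that $P-Q$ is majorized in singular values by $P\oplus Q$; one can also invoke the Bhatia–Kittaneh type estimate. Granting it, and applying it with $P=M$ and $Q=JMJ$, we obtain
\[
2^p\cdot 2\|B\|_{S_p}^p\le 2\|M\|_{S_p}^p ,
\]
which gives $2\|B\|_{S_p}\le\|M\|_{S_p}$, exactly the claim. Note that the plain triangle inequality only yields $2^{1+1/p}\|B\|_{S_p}\le 2\|M\|_{S_p}$, which is too weak for $p>1$. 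So the difference inequality is where the real work lies.

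If I prefer to avoid citing that inequality, there is a direct alternative using a factorization. Since $M\ge0$, we can write $M=R^*R$ with $R=(R_1\ R_2)$, so that $B=R_1^*R_2$ and $M$ has the same nonzero spectrum as $RR^*=R_1R_1^*+R_2R_2^*$. Hölder's inequality then gives
\[
\|B\|_{S_p}\le\|R_1\|_{S_{2p}}\|R_2\|_{S_{2p}}=\|R_1R_1^*\|_{S_p}^{1/2}\|R_2R_2^*\|_{S_p}^{1/2}\le\tfrac12\bigl(\|R_1R_1^*\|_{S_p}+\|R_2R_2^*\|_{S_p}\bigr).
\]
The remaining task is to bound $\|X\|_{S_p}+\|Y\|_{S_p}$ by $\|X+Y\|_{S_p}$ for $X,Y\ge0$. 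This holds for $p=1$, but for $p>1$ it fails in general; already orthogonal projections give a counterexample. So this route only works for $p=1$. For that reason I would commit to the first route, with the main obstacle being a careful justification of the difference inequality. One way to prove it is to observe that $\begin{pmatrix}P&0\\0&Q\end{pmatrix}$ and $\begin{pmatrix}P-Q&0\\0&0\end{pmatrix}$ are related by a pinching after a $45^\circ$ rotation, and that pinchings contract all Schatten norms.
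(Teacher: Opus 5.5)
Your reduction is correct. $JMJ\ge0$, $M-JMJ=2\widetilde B$ and $\|\widetilde B\|_{S_p}=2^{1/p}\|B\|_{S_p}$ all hold, and the final arithmetic is right. So the lemma follows from $\|P-Q\|_{S_p}^p\le\|P\|_{S_p}^p+\|Q\|_{S_p}^p$ for $P,Q\ge0$. That inequality is true and in the literature, so citing it closes the argument.

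What does not work is the self-contained proof of it that you sketch at the end. Conjugating $P\oplus Q$ by the $45^\circ$ rotation gives $\frac12\bigl(\begin{smallmatrix}P+Q&P-Q\\P-Q&P+Q\end{smallmatrix}\bigr)$. Its pinching is $\frac12(P+Q)\oplus\frac12(P+Q)$, not $(P-Q)\oplus0$.

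In fact no argument of this type can succeed. A fixed unitary conjugation followed by a pinching is a linear map that contracts every $S_p$ norm on \emph{all} operators. If it sent $P\oplus Q$ to $(P-Q)\oplus 0$ for positive $P,Q$, then by linearity it would do so for $Q=-P$ as well. That would give $2^p\|P\|_{S_p}^p\le 2\|P\|_{S_p}^p$, which is false for $p>1$. Positivity has to enter somewhere. Taking instead the off-diagonal part of the rotated matrix only yields $\|P-Q\|_{S_p}^p\le 2^{p-1}(\|P\|_{S_p}^p+\|Q\|_{S_p}^p)$, which is exactly the loss you already saw with the triangle inequality.

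A short correct proof uses the Jordan decomposition. Write $X=P-Q=X_+-X_-$ and let $E$ be the spectral projection of $X$ for $(0,\infty)$.
\begin{itemize}
\item $X_+=EXE=EPE-EQE\le EPE$, so Weyl monotonicity and the minimax principle give $\lambda_j(X_+)\le\lambda_j(EPE)\le\lambda_j(P)$.
\item $X_-=-(I-E)X(I-E)\le (I-E)Q(I-E)$, so likewise $\lambda_j(X_-)\le\lambda_j(Q)$.
\item Since $X_+X_-=0$, the nonzero singular values of $X$ are those of $X_+$ and $X_-$ combined.
\end{itemize}
Hence $\|X\|_{S_p}^p\le\|P\|_{S_p}^p+\|Q\|_{S_p}^p$, and indeed $s_j(P-Q)\le s_j(P\oplus Q)$ for every $j$.

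With that in hand, your route is genuinely different from the paper's, although both start from the same identity. The paper builds $w_k=2^{-1/2}(e_k,f_k)$ and $v_k=2^{-1/2}(e_k,-f_k)=Jw_k$ from singular-vector pairs of $B$. Its identity $\langle Mw_k,w_k\rangle-\langle Mv_k,v_k\rangle=2s_k(B)$ is precisely $\langle (M-JMJ)w_k,w_k\rangle=2s_k(B)$. Instead of estimating the norm of the difference, the paper discards $\langle JMJw_k,w_k\rangle\ge0$ and applies Ky Fan's maximum principle to the orthonormal family $(w_k)$. This gives $2s(B)\prec_w\lambda(M)$ in two lines with no auxiliary lemma.

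Your route needs the difference inequality, but its pointwise form buys more. The singular values of $2\widetilde B$ are those of $2B$, each repeated twice. Those of $M\oplus JMJ$ are those of $M$, each repeated twice. So you obtain $2s_j(B)\le\lambda_j(M)$ for every $j$, not merely weak majorization.
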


\begin{proof}
Let $s_k(B)$ be the singular values of $B$, with singular-vector pairs
$(e_k,f_k)$ chosen so that $Bf_k=s_k(B)e_k$ and $B^*e_k=s_k(B)f_k$. Set
\[
w_k=\frac{1}{\sqrt2}(e_k,f_k),\qquad
v_k=\frac{1}{\sqrt2}(e_k,-f_k).
\]
Since $M\ge0$, $\langle Mv_k,v_k\rangle\ge0$, and
\[
\langle Mw_k,w_k\rangle-\langle Mv_k,v_k\rangle=2s_k(B).
\]
Hence $\langle Mw_k,w_k\rangle\ge 2s_k(B)$. By Ky Fan's maximum principle, see e.g. \cite[Chapter III]{BhatiaMatrixAnalysis}, 
for every $m$,
\[
2\sum_{k=1}^m s_k(B)
\le
\sum_{k=1}^m\langle Mw_k,w_k\rangle
\le
\sum_{k=1}^m \lambda_k(M).
\]
Thus $2s(B)\prec_w \lambda(M)$. Taking $\ell_p$-norms gives the result.
\end{proof}
Applying Lemma~\ref{lem:offdiag} to $M=\sum_j P_j$ and to its off-diagonal
block $\sum_j T_j$ gives
\[
2\Bigl\|\sum_j T_j\Bigr\|_{S_p}\leq\Bigl\|\sum_j P_j\Bigr\|_{S_p}.
\]

It remains to verify that the same coefficients $(\alpha_{ij})$ are admissible
for the positive family $(P_j)$. Since $P_i=P_i^*$ and $P_j=P_j^*$, the two
conditions in \eqref{carbp} coincide for this family; hence it is enough to
prove
\[
\|P_iP_j\|_{S_{p/2}}
\le
\alpha_{ij}^2\|P_i\|_{S_p}\|P_j\|_{S_p}.
\]
Set
\[
V_j:\mathcal H\to\mathcal H\oplus\mathcal H,\qquad V_jh=(U_jh,h).
\]
Then $\|V_j\|_\infty\le \sqrt2$, $V_i^*V_j=U_i^*U_j+I$, and
\[
P_j=V_j|T_j|V_j^*.
\]
Indeed, $|T_j|=s_j|T_j|$ and $U_j^*U_j=s_j$. Hence
\[
P_iP_j
=
V_i|T_i|V_i^*V_j|T_j|V_j^*
=
V_i|T_i|(U_i^*U_j+I)|T_j|V_j^*.
\]
By the ideal property of Schatten norms (i.e., $\|AXB\|_{S_r}
\le
\|A\|_{\infty}\,\|X\|_{S_r}\,\|B\|_{\infty}$, $1\le r\le \infty$, 
whenever \(X\in S_r\) and \(A,B\) are bounded operators),  and $\|V_i\|_\infty,\|V_j\|_\infty\le
\sqrt2$,
\[
\|P_iP_j\|_{S_{p/2}}
\le
2\bigl\||T_i|(U_i^*U_j+I)|T_j|\bigr\|_{S_{p/2}}.
\]
The triangle inequality splits the right-hand side. The first summand is
\[
|T_i|U_i^*U_j|T_j|=T_i^*T_j,
\]
and therefore
\[
\bigl\||T_i|U_i^*U_j|T_j|\bigr\|_{S_{p/2}}
=
\|T_i^*T_j\|_{S_{p/2}}.
\]
For the second summand, put $A=|T_i||T_j|$. Then
$A=s_iAs_j$, where $s_i$ and $s_j$ are the support projections of $|T_i|$
and $|T_j|$. Let $t_j=U_jU_j^*$ be the support projection of $|T_j^*|$. Since
$U_i:s_i\mathcal H\to U_iU_i^*\mathcal H$ and
$U_j:s_j\mathcal H\to t_j\mathcal H$ are unitary maps between their support
spaces, the operators $A$ and $U_iAU_j^*$ have the same non-zero singular
values. Equivalently,
\[
(U_iAU_j^*)^*(U_iAU_j^*)
=
U_jA^*U_i^*U_iAU_j^*
=
U_jA^*AU_j^*,
\]
and $U_jA^*AU_j^*$ is unitarily equivalent to $A^*A$ on the support $s_j\mathcal
H$. Since $U_iAU_j^*=T_iT_j^*$, it follows that
\[
\bigl\||T_i||T_j|\bigr\|_{S_{p/2}}
=
\|T_iT_j^*\|_{S_{p/2}}.
\]
By the given hypothesis both quantities are at most $\alpha_{ij}^2 a_i a_j$, so
\[
\|P_i P_j\|_{S_{p/2}}\leq 4\alpha_{ij}^2 a_i a_j.
\]
On the other hand $\|P_i\|_{S_p}\|P_j\|_{S_p}=4a_i a_j$, and therefore
\[
\|P_i P_j\|_{S_{p/2}}\leq\alpha_{ij}^2\|P_i\|_{S_p}\|P_j\|_{S_p}.
\]
The positive-operator inequality already established (applied to the family $(P_j)$) yields
\[
\Bigl\|\sum_j P_j\Bigr\|_{S_p}\leq K^{1/p'}\Bigl(\sum_j\|P_j\|_{S_p}^p\Bigr)^{1/p}=2K^{1/p'}\Bigl(\sum_j a_j^p\Bigr)^{1/p},
\]
where $K=\|(\alpha_{ij}^{p'})\|_{\ell_2\to\ell_2}$. Combining with the off-diagonal estimate and cancelling the factor of 2 produces
\[
\Bigl\|\sum_{j=1}^N T_j\Bigr\|_{S_p}\leq K^{1/p'}\Bigl(\sum_{j=1}^N a_j^p\Bigr)^{1/p}.
\]
This is the asserted inequality for arbitrary operators. The zero cases are covered by the conventions and approximation argument above.

This completes the proof of Theorem~\ref{mainthm}.
\subsection{Concluding remarks}
\subsubsection{Countable family}\label{count1}
Theorem~\ref{mainthm} extends in the same form to countable families. More
precisely, let \(2\le p<\infty\), let \((T_j)_{j\ge1}\subset S_p(\mathcal H)\)
satisfy
\[
\sum_{j=1}^{\infty}\|T_j\|_{S_p}^p<\infty,
\]
and let \((\alpha_{ij})_{i,j\ge1}\) be a symmetric matrix with entries in
\([0,1]\) satisfying \eqref{carbp} for every pair \(i,j\). Assume moreover that
\[
\bigl\|(\alpha_{ij}^{p'})_{i,j\ge1}\bigr\|_{\ell_2(\mathbb N)\to
\ell_2(\mathbb N)}<\infty.
\]
Then the series \(\sum_{j=1}^{\infty}T_j\) converges in \(S_p\), and
\[
\Bigl\|\sum_{j=1}^{\infty}T_j\Bigr\|_{S_p}
\le
 \bigl\|(\alpha_{ij}^{p'})_{i,j\ge1}\bigr\|_{\ell_2(\mathbb N)\to
\ell_2(\mathbb N)}^{1/p'}
\Bigl(\sum_{j=1}^{\infty}\|T_j\|_{S_p}^p\Bigr)^{1/p}.
\]

Indeed, applying Theorem~\ref{mainthm} to the finite family
\(T_m,\dots,T_n\), and observing that the corresponding finite $(n-m+1)\times (n-m+1)$ coefficient
matrix is a compression of \((\alpha_{ij}^{p'})_{i,j\ge1}\), we get
\[
\Bigl\|\sum_{j=m}^{n}T_j\Bigr\|_{S_p}
\le
\bigl\|(\alpha_{ij}^{p'})_{i,j\ge1}\bigr\|_{\ell_2(\mathbb N)\to
\ell_2(\mathbb N)}^{1/p'}
\Bigl(\sum_{j=m}^{n}\|T_j\|_{S_p}^p\Bigr)^{1/p}.
\]
The right-hand side tends to \(0\) as \(m,n\to\infty\), hence the partial sums
are Cauchy in \(S_p\) (recall that $S_{p}$ is complete). Letting \(n\to\infty\) in the finite estimate for
\(\sum_{j=1}^{n}T_j\) gives the asserted countable-family inequality.

\subsubsection{Endpoint case \texorpdfstring{$p=\infty$}{p=infinity}}\label{Grok11}
We establish the asserted inequality in the endpoint case \(p=\infty\). Let \(T_1,\dots,T_N\in B(\mathcal{H})\) satisfy the almost-orthogonality conditions \eqref{carbp} with coefficients \((\alpha_{ij})\) and with \(a_j=\|T_j\|_\infty\). (As usual we interpret \(S_\infty\) as the space of all bounded operators equipped with the operator norm.) We first treat the positive case and then reduce the general case to it by means of the \(2\times 2\) dilation already employed in Subsection~\ref{subsec:general}.

Assume first that each \(T_j\ge 0\). Define the block operator \(X=R^*R\) on \(\mathcal{H}^N\) precisely as at the beginning of Section~\ref{compli}; since \(X=R^*R\) and \(RR^*=\sum_jT_j\), we have
\[
\|X\|_\infty=\|R^*R\|_\infty=\|R\|_\infty^2
=\|RR^*\|_\infty
=\Bigl\|\sum_{j=1}^N T_j\Bigr\|_\infty .
\]
Moreover,
\[
\|X_{ij}\|_\infty^2
=
\|T_j^{1/2}T_iT_j^{1/2}\|_\infty
=
r(T_j^{1/2}T_iT_j^{1/2})
=
r(T_iT_j)
\le
\|T_iT_j\|_\infty
\le
\alpha_{ij}^2a_ia_j, 
\]
where $r(A)$ denotes the spectral radius of a bounded operator $A$.
Consequently, for an arbitrary vector \(h=(h_1,\dots,h_N)\in\mathcal{H}^N\),
\[
\|(Xh)_i\|\le\Bigl(\max_k a_k\Bigr)\sum_{j=1}^N\alpha_{ij}\|h_j\|.
\]
Taking the \(\ell_2\)-norm over the components \(i\) and invoking the definition of the operator norm of the matrix \((\alpha_{ij})\) on \(\ell_2^N\) gives
\[
\|Xh\|_{\mathcal{H}^N}\le\Bigl(\max_k a_k\Bigr)\bigl\|(\alpha_{ij})\bigr\|_{\ell_2\to\ell_2}\|h\|_{\mathcal{H}^N}.
\]
Hence
\[
\Bigl\|\sum_{j=1}^N T_j\Bigr\|_\infty\le\bigl\|(\alpha_{ij})\bigr\|_{\ell_2\to\ell_2}\cdot\max_j a_j,
\]
which is the desired endpoint estimate for positive operators.

For arbitrary (not necessarily positive or self-adjoint) operators we apply the same \(2\times 2\) positive dilation $P_{j}$
on \(\mathcal{H}\oplus\mathcal{H}\) that was used in Subsection~\ref{subsec:general} (see~(\ref{dadeb})). The identical algebraic computation performed there shows that the family \((P_j)\) satisfies the almost-orthogonality conditions with the \emph{same} coefficients \((\alpha_{ij})\) (the factor \(4\) obtained from the ideal property, the bounds
\(\|V_j\|_\infty\le \sqrt2\), and the two resulting summands cancel exactly
against the factor \(4\) coming from \(\|P_i\|_\infty\|P_j\|_\infty=4a_ia_j\)). The positive case already established therefore yields
\[
\Bigl\|\sum_j P_j\Bigr\|_\infty\le\bigl\|(\alpha_{ij})\bigr\|_{\ell_2\to\ell_2}\cdot\max_j\|P_j\|_\infty=2\bigl\|(\alpha_{ij})\bigr\|_{\ell_2\to\ell_2}\cdot\max_j a_j.
\]

 Finally, the off-diagonal block estimate also holds in operator norm. Indeed,
if
\[
M=\begin{pmatrix}A&B\\B^*&C\end{pmatrix}\ge0,
\]
then \(2\|B\|_\infty\le \|M\|_\infty\). To see this, let \(x,y\in\mathcal H\)
be unit vectors and choose a phase \(\omega\in\mathbb T\) such that
\(\omega\langle By,x\rangle=|\langle By,x\rangle|\). Put
\[
w=\frac1{\sqrt2}(x,\omega y),\qquad
v=\frac1{\sqrt2}(x,-\omega y).
\]
Then \(w\) and \(v\) are unit vectors, and
\[
\langle Mw,w\rangle-\langle Mv,v\rangle
=
2|\langle By,x\rangle|.
\]
Since \(M\ge0\), we have \(\langle Mv,v\rangle\ge0\), hence
\[
2|\langle By,x\rangle|
\le
\langle Mw,w\rangle
\le
\|M\|_\infty .
\]
Taking the supremum over unit \(x,y\) gives
\[
2\|B\|_\infty\le \|M\|_\infty .
\]
 
 Applying this comparison to the off-diagonal block of \(\sum_j P_j\) therefore produces
\[
\Bigl\|\sum_j T_j\Bigr\|_\infty\le\tfrac12\Bigl\|\sum_j P_j\Bigr\|_\infty\le\bigl\|(\alpha_{ij})\bigr\|_{\ell_2\to\ell_2}\cdot\max_j a_j,
\]
as claimed. (Since \(N<\infty\) no approximation argument is required.)

In other words, the inequality of Theorem~\ref{mainthm} continues to hold for \(p=\infty\) upon interpreting the right-hand side in the natural limiting sense
\[
\bigl\|(\alpha_{ij}^{p'})\bigr\|_2^{1/p'}\Bigl(\sum_j\|T_j\|_{S_p}^p\Bigr)^{1/p}\ \longrightarrow\ \bigl\|(\alpha_{ij})\bigr\|_{\ell_2\to\ell_2}\cdot\max_j\|T_j\|_\infty.
\]

\section*{Use of artificial intelligence tools}

The authors have benefited from the use of artificial intelligence tools, most notably Grok. The reduction to the block-operator family \(X\) was proposed by Grok. The complex-interpolation argument was developed through a sustained collaboration between the human authors and the AI system; the corresponding discussion is recorded at
\url{https://grok.com/share/c2hhcmQtNA_23bee943-d863-4c27-803e-e80e7e50082d}. The proof presented in this paper is a streamlined and adapted version of that argument and has been extended to arbitrary operators via the standard \(2\times 2\) positive dilation. We also note that a proof of the commutative \(L^p\) analog of (\ref{oripower}) with optimal $(\alpha_{ij}^{p'})$ instead of $(\alpha_{ij}^{2})$  was obtained by a different route (not recorded in this paper), in which one essential step originated with a human author and the remaining steps were due to Grok (see
\url{https://grok.com/share/c2hhcmQtNA_ba3f087b-b0e7-40b2-815e-d9ab7de6f8f5}). Section~\ref{Grok11}, i.e., the endpoint $p=\infty$ case,  is the adapted and polished version of a response due to Grok:  \url{https://grok.com/share/c2hhcmQtNA_640ef40f-ffba-4775-a00a-6aa486cbf074}.

\section*{Acknowledgments}

P.I. acknowledges partial support from the NSF CAREER grant DMS-2152401, NSF grant DMS- 2554183, a Simons Fellowship, and a Humboldt Research Fellowship for Experienced Researchers. J.M. was partially supported by the Simons Foundation Grant $\# 453576$. 

\bibliographystyle{plain}

\begin{thebibliography}{10}

\bibitem{BhatiaMatrixAnalysis}
R.~Bhatia.
\newblock \emph{Matrix Analysis}.
\newblock Graduate Texts in Mathematics 169, Springer, 1997.

\bibitem{Carbery2009}
A. Carbery.
\newblock Almost-orthogonality in the Schatten--von Neumann classes.
\newblock \emph{J. Operator Theory}, 62(1):151--158, 2009.

\bibitem{CFIL2021}
E.~A. Carlen, R.~L. Frank, P. Ivanisvili, and E.~H. Lieb.
\newblock Inequalities for $L^p$-norms that sharpen the triangle inequality and complement Hanner's inequality.
\newblock \emph{J. Geom. Anal.}, 31:4051--4073, 2021.

\bibitem{CFL2020}
E.~A. Carlen, R.~L. Frank, and E.~H. Lieb.
\newblock Inequalities that sharpen the triangle inequality for sums of $N$ functions in $L^p$.
\newblock \emph{Ark. Mat.}, 58(1):57--69, 2020.


\bibitem{CDPIMW2026}
Z. Chen, J. de Dios Pont, P. Ivanisvili, J. Madrid, and H. Wang.
\newblock Almost-orthogonality in $L^p$ spaces: a case study with Grok.
\newblock \emph{arXiv preprint arXiv:2605.05192}, 2026.


\bibitem{FournierRusso1977}
J.~J.~F. Fournier and B. Russo.
\newblock Abstract interpolation and operator-valued kernels.
\newblock \emph{J. London Math. Soc. (2)}, 16(2):283--289, 1977.

\bibitem{IvanisviliMooney2020}
P. Ivanisvili and C. Mooney.
\newblock Sharpening the triangle inequality: envelopes between $L^2$ and $L^p$ spaces.
\newblock \emph{Analysis \& PDE}, 13(5):1591--1603, 2020.

\bibitem{Ringrose1971}
J.~R. Ringrose.
\newblock \emph{Compact Nonselfadjoint Operators}.
\newblock Van Nostrand Reinhold, London--New York, 1971.

\bibitem{SimonTraceIdeals}
B.~Simon.
\newblock \emph{Trace Ideals and Their Applications}.
\newblock 2nd ed., Mathematical Surveys and Monographs 120,
American Mathematical Society, 2005.


\end{thebibliography}

\end{document}